\def\l{\left}
\def\r{\right}
\def\bg{\bigg}
\def\({\bg(}
\def\){\bg)}
\def\t{\text}
\def\f{\frac}
\def\eq{\equiv}
\def\Z{\mathbb Z}
\def\N{\mathbb N}
\def\1{{\bf 1}}
\theoremstyle{plain}
\newtheorem{theorem}{Theorem}[section]
\newtheorem{lemma}{Lemma}
\newtheorem{corollary}{Corollary}
\newtheorem{conjecture}{Conjecture}
\theoremstyle{definition}
\newtheorem*{Acks}{Acknowledgments}
\theoremstyle{remark}
\def\<{\langle}
\def\>{\rangle}
\begin{document}
\hbox{}
\medskip

\title[A parametric congruence motivated by Orr's identity]{A parametric congruence motivated by Orr's identity}

\author{Chen Wang$^{1,*}$ and Zhi-Wei Sun$^2$}
\email{cwang@smail.nju.edu.cn, zwsun@nju.edu.cn}
\address{\small $^1$Department of Applied Mathematics, Nanjing Forestry
University, Nanjing 210037, People's Republic of China\\
$^2$Department of Mathematics, Nanjing
University, Nanjing 210093, People's Republic of China}
\thanks{$^{*}$Corresponding author}

\subjclass[2010]{Primary 05A10, 33C20; Secondary 11A07, 11B65}
\keywords{Congruences, truncated hypergeometric series, binomial coefficients, $p$-adic Gamma function}
\begin{abstract} For any $m,n\in\mathbb{N}=\{0,1,2\ldots\}$, the truncated hypergeometric series ${}_{m+1}F_m$ is defined by
$$
{}_{m+1}F_m\bigg[\begin{matrix}x_0&x_1&\ldots&x_m\\ &y_1&\ldots&y_m\end{matrix}\bigg|z\bigg]_n=\sum_{k=0}^n\frac{(x_0)_k(x_1)_k\cdots(x_m)_k}{(y_1)_k\cdots(y_m)_k}\cdot\frac{z^k}{k!},
$$
where $(x)_k=x(x+1)\cdots(x+k-1)$ is the Pochhammer symbol. Let $p$ be an odd prime. For $\alpha,z\in\mathbb{Z}_p$ with $\langle -\alpha\rangle_p\equiv0\pmod{2}$, where $\langle x\rangle_p$ denotes the least nonnegative residue of $x$ modulo $p$ for any $x\in\mathbb{Z}_p$, we mainly prove the following congruence motivated by Orr's identity:
$$
{}_2F_1\bigg[\begin{matrix}\frac12\alpha&\frac32-\frac12\alpha\\ &1\end{matrix}\bigg|z\bigg]_{p-1}{}_2F_1\bigg[\begin{matrix}\frac12\alpha&\frac12-\frac12\alpha\\ &1\end{matrix}\bigg|z\bigg]_{p-1}\equiv{}_3F_2\bigg[\begin{matrix}\alpha&2-\alpha&\frac12\\ &1&1\end{matrix}\bigg|z\bigg]_{p-1}\pmod{p^2}.
$$
As a corollary, for any positive integer $b$ with $p\equiv\pm1\pmod{b}$ and $\langle -1/b\rangle_p\equiv0\pmod{2}$, we deduce that
$$
\sum_{k=0}^{p-1}(b^2k+b-1)\frac{\binom{2k}{k}}{4^k}\binom{-1/b}{k}\binom{1/b-1}{k}\equiv0\pmod{p^2}.
$$
This confirms a conjectural congruence of the second author.\\

\noindent{\bf Competing Interests and Funding}: The authors are supported by the National Natural Science Foundation of China (grants 12201301 and 11971222, respectively).

\end{abstract}
\maketitle

\section{Introduction}
\setcounter{lemma}{0}
\setcounter{theorem}{0}
\setcounter{equation}{0}
\setcounter{conjecture}{0}
\setcounter{corollary}{0}
\setcounter{remark}{0}

For any $m,n\in\mathbb{N}=\{0,1,2\ldots\}$, the truncated hypergeometric series ${}_{m+1}F_m$ is defined by
$$
{}_{m+1}F_m\bigg[\begin{matrix}x_0&x_1&\ldots&x_m\\ &y_1&\ldots&y_m\end{matrix}\bigg|z\bigg]_n=\sum_{k=0}^n\frac{(x_0)_k(x_1)_k\cdots(x_m)_k}{(y_1)_k\cdots(y_m)_k}\cdot\frac{z^k}{k!},
$$
where $(x)_k=x(x+1)\cdots(x+k-1)$ is the Pochhammer symbol. Clearly, the above truncated hypergeometric series is the truncation of the original hypergeometric series after the $z^n$ term.

Summation and transformation formulas for hypergeometric series play an important role in the study of the congruence properties of truncated hypergeometric series (see, e.g., \cite{Guo2017,Liu,MaoPan,PTW,Sun2011,Sunthreebinomial,Sun2013,Tauraso, WangPan,WangSun2020,WangXia}). Recall the well-known Clausen's identity (cf. \cite[p. 116]{AAR99})
\begin{equation}\label{clausen}
\l({}_2F_1\bigg[\begin{matrix}\f12\alpha&\f12\beta\\&\f12+\f12(\alpha+\beta)\end{matrix}\bigg|z\bigg]\r)^2={}_3F_2\bigg[\begin{matrix}\alpha&\beta&\f12(\alpha+\beta)\\&\alpha+\beta&\f12+\f12(\alpha+\beta)\end{matrix}\bigg|z\bigg].
\end{equation}
Letting $\beta=1-\alpha$ in \eqref{clausen} we obtain
\begin{equation}\label{clausen'}
\l({}_2F_1\bigg[\begin{matrix}\f12\alpha&\f12-\f12\alpha\\&1\end{matrix}\bigg|z\bigg]\r)^2={}_3F_2\bigg[\begin{matrix}\alpha&1-\alpha&\f12\\&1&1\end{matrix}\bigg|z\bigg].
\end{equation}
Let $p$ be an odd prime and let $\Z_p$ denote the ring of all $p$-adic integers. For any $x\in\Z_p$ let $\<x\>_p$ denote the least nonnegative residue of $x$ modulo $p$. Mao and Pan \cite{MaoPan} proved the following parametric congruence with respect to the identity \eqref{clausen'}:
\begin{equation}\label{clausen'analogue}
\l({}_2F_1\bigg[\begin{matrix}\f12\alpha&\f12-\f12\alpha\\&1\end{matrix}\bigg|z\bigg]_{p-1}\r)^2\eq{}_3F_2\bigg[\begin{matrix}\alpha&1-\alpha&\f12\\&1&1\end{matrix}\bigg|z\bigg]_{p-1}\pmod{p^2},
\end{equation}
where $\alpha,z\in\Z_p$ and $\<-\alpha\>_p$ is even.

When he discussed the differential equation satisfied by the product of two hypergeometric series, Orr discovered the following formula (cf. \cite[p. 180]{AAR99}) similar to \eqref{clausen}:
\begin{equation}\label{orr}
{}_2F_1\bigg[\begin{matrix}\f12\alpha&\f12\beta\\ &\f12(\alpha+\beta)-\f12\end{matrix}\bigg|z\bigg]{}_2F_1\bigg[\begin{matrix}\f12\alpha&\f12\beta-1\\ &\f12(\alpha+\beta)-\f12\end{matrix}\bigg|z\bigg]={}_3F_2\bigg[\begin{matrix}\alpha&\beta-1&\f12(\alpha+\beta)-1\\ &\alpha+\beta-2&\f12(\alpha+\beta)-\f12\end{matrix}\bigg|z\bigg].
\end{equation}
Putting $\beta=3-\alpha$ in \eqref{orr} we have
\begin{equation}\label{orr'}
{}_2F_1\bigg[\begin{matrix}\f12\alpha&\f32-\f12\alpha\\ &1\end{matrix}\bigg|z\bigg]{}_2F_1\bigg[\begin{matrix}\f12\alpha&\f12-\f12\alpha\\ &1\end{matrix}\bigg|z\bigg]={}_3F_2\bigg[\begin{matrix}\alpha&2-\alpha&\f12\\ &1&1\end{matrix}\bigg|z\bigg].
\end{equation}

The main purpose of this paper is to establish a parametric congruence corresponding to \eqref{orr'}.

\begin{theorem}\label{orranalogue}
Let $p$ be an odd prime. Then, for $\alpha,z\in\Z_p$ with $\<-\alpha\>_p\eq0\pmod{2}$ we have
\begin{equation}\label{orranalogueeq}
{}_2F_1\bigg[\begin{matrix}\frac12\alpha&\frac32-\frac12\alpha\\ &1\end{matrix}\bigg|z\bigg]_{p-1}{}_2F_1\bigg[\begin{matrix}\frac12\alpha&\frac12-\frac12\alpha\\ &1\end{matrix}\bigg|z\bigg]_{p-1}\equiv{}_3F_2\bigg[\begin{matrix}\alpha&2-\alpha&\frac12\\ &1&1\end{matrix}\bigg|z\bigg]_{p-1}\pmod{p^2}.
\end{equation}
\end{theorem}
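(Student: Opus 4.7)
Let me denote $A_k:=(\alpha/2)_k(3/2-\alpha/2)_k/(k!)^2$ and $B_k:=(\alpha/2)_k(1/2-\alpha/2)_k/(k!)^2$, so the two ${}_2F_1$ factors on the left of \eqref{orranalogueeq} are $A(z):=\sum_{k=0}^{p-1}A_kz^k$ and $B(z):=\sum_{k=0}^{p-1}B_kz^k$; write $C(z)$ for the truncated ${}_3F_2$ on the right of \eqref{orranalogueeq} and $\widetilde C(z):={}_3F_2[\alpha,1-\alpha,\tfrac12;1,1|z]_{p-1}$. My plan is to deduce Theorem~\ref{orranalogue} from the Mao--Pan congruence \eqref{clausen'analogue}, which says $B(z)^2\equiv\widetilde C(z)\pmod{p^2}$, by exploiting a contiguous relation linking $A(z)$ to $B(z)$.

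Setting $\gamma:=(1-\alpha)/2$, the elementary identity
$$\frac{(3/2-\alpha/2)_k}{(1/2-\alpha/2)_k}=\frac{(2k+1-\alpha)/2}{(1-\alpha)/2}=1+\frac{k}{\gamma}$$
yields $\gamma A_k=\gamma B_k+kB_k$, hence the polynomial identity $\gamma A(z)=\gamma B(z)+zB'(z)$. Multiplying through by $B(z)$ gives
$$\gamma A(z)B(z)=\gamma B(z)^2+\tfrac z2\bigl(B(z)^2\bigr)'.$$
A completely parallel coefficient identity, based on $(2-\alpha)_n/(1-\alpha)_n=(n+1-\alpha)/(1-\alpha)=1+n/(2\gamma)$, produces $\gamma C(z)=\gamma\widetilde C(z)+\tfrac z2\widetilde C'(z)$. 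Writing $F(z):=B(z)^2-\widetilde C(z)$ and subtracting:
$$\gamma\bigl(A(z)B(z)-C(z)\bigr)=\gamma F(z)+\tfrac z2F'(z).$$

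Inspection of Mao--Pan's proof of \eqref{clausen'analogue} shows that each coefficient of the polynomial $F(z)$ lies in $p^2\Z_p$, and therefore so does each coefficient of $F'(z)$. Combining this with the displayed identity yields $\gamma\bigl(A(z)B(z)-C(z)\bigr)\equiv0\pmod{p^2}$. When $\alpha\not\equiv1\pmod p$ the quantity $\gamma$ is a unit in $\Z_p$, and the conclusion $A(z)B(z)\equiv C(z)\pmod{p^2}$ follows immediately upon dividing by $\gamma$.

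The main obstacle is the edge case $\alpha\equiv1\pmod p$, which is permitted by the hypothesis since $\langle-1\rangle_p=p-1$ is even but which forces $\gamma\in p\Z_p$, so that naive division by $\gamma$ costs a factor of $p$. At $\alpha=1$ exactly one has $B(z)\equiv1$ and $A(z)=C(z)$, so the target congruence holds identically; for $\alpha=1+p\beta$ with $\beta\in\Z_p$, I would Taylor-expand $A(z)B(z)-C(z)$ in powers of $\alpha-1$, use the identical vanishing at $\alpha=1$ together with the bound $\gamma(A(z)B(z)-C(z))\equiv0\pmod{p^2}$ obtained above, and if necessary supplement this with a direct $p$-adic analysis of the boundary Cauchy-product coefficients $\sum_{i+j=n,\,0\leq i,j\leq p-1}A_iB_j$ for $p\leq n\leq 2p-2$ using $p$-adic Gamma function identities to rewrite shifted Pochhammer symbols modulo $p^2$, thereby upgrading the modulus back to $p^2$ throughout the coset $1+p\Z_p$.
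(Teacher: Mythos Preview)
Your main-case argument ($\alpha\not\equiv 1\pmod p$) is correct and takes a route different from the paper's. The paper applies a two-point interpolation: it sets $\Phi(x)$ equal to the difference of the two sides of \eqref{orranalogueeq} with $\alpha$ replaced by $-a+x$ (where $a=\<-\alpha\>_p$), checks $\Phi(0)=\Phi(p)=0$ directly from Orr's identity \eqref{orr'}, and concludes via $\Phi(tp)\equiv\Phi(0)+tp\Phi'(0)\pmod{p^2}$. Your argument instead reduces everything to the Mao--Pan congruence \eqref{clausen'analogue} through the contiguous relation $\gamma A=\gamma B+zB'$; this avoids any fresh appeal to Orr's identity but requires the \emph{coefficient-wise} version of \eqref{clausen'analogue}. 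That stronger form does hold, and you do not actually need to inspect Mao--Pan's proof: for $n\le p-1$ the $z^n$-coefficient of $F=B^2-\widetilde C$ vanishes \emph{exactly} by Clausen's identity \eqref{clausen'}, while for $n\ge p$ it is a sum $\sum_{i+j=n}B_iB_j$ in which a valuation count (using $v_p(B_k)\ge 1$ once $k>m:=\min(a/2,(p-1-a)/2)$ and $v_p(B_k)\ge 2$ once $k>(p-1)/2-m$) forces every term into $p^2\Z_p$, since $i+j\ge p$ rules out $i+j\le (p-1)/2$.

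Your edge case $\alpha\equiv 1\pmod p$ is not complete, however. Writing $\alpha=1+p\beta$ and Taylor-expanding about $\alpha=1$ (using $(AB-C)|_{\alpha=1}=0$) gives $(AB-C)(z)\equiv p\beta\cdot\partial_\alpha(AB-C)|_{\alpha=1}\pmod{p^2}$; computing that derivative from $B|_{\alpha=1}=1$, $A|_{\alpha=1}=C|_{\alpha=1}$ and $\partial_\alpha C|_{\alpha=1}=0$ reduces the target precisely to
\[
\sum_{k=0}^{p-1}\binom{2k}{k}(H_{2k}-H_k)x^k\equiv\frac12\bigg(\sum_{k=0}^{p-1}\binom{2k}{k}x^k\bigg)\bigg(\sum_{k=1}^{p-1}\frac{\binom{2k}{k}}{k}x^k\bigg)\pmod p
\]
with $x=z/4$. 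This is exactly Tauraso's congruence (Lemma~\ref{tauraso} in the paper), a nontrivial combinatorial input that neither the bound $\gamma(AB-C)\equiv0\pmod{p^2}$ (automatic here, since $\gamma\in p\Z_p$ and $AB-C\in p\Z_p$) nor ``$p$-adic Gamma function identities for shifted Pochhammer symbols'' will supply. The paper's Case~2 is precisely this computation, so once Lemma~\ref{tauraso} is invoked your edge case and the paper's coincide.
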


Recently, the second author \cite[Conjecture 19]{Sun2019} posed the following conjecture.

\begin{conjecture}\label{sunconj}
Let $b,n\in\Z^{+}$ and let $p$ be a prime with $p\eq\pm1\pmod{b}$ and $\<-1/b\>_p\eq0\pmod{2}$. Then
\begin{equation}\label{sunconjeq}
\f{1}{n^2\binom{-1/b}{n}\binom{1/b-1}{n}}\sum_{k=0}^{pn-1}(b^2k+b-1)\f{\binom{2k}{k}}{4^k}\binom{-1/b}{k}\binom{1/b-1}{k}\eq0\pmod{p^2}.
\end{equation}
\end{conjecture}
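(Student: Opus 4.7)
The plan is to deduce Conjecture~\ref{sunconj} in two stages: first, strengthen Theorem~\ref{orranalogue} to a parametric congruence at truncation length $pn-1$ with an enhanced modulus; second, specialize to $\al=1/b$, $z=1$ and evaluate each of the resulting truncated ${}_2F_1$ factors via the $p$-adic Gamma function. Throughout set $\al=1/b$; the hypothesis $p\eq\pm1\pmod{b}$ guarantees $\al\in\Z_p^\times$. A direct computation gives $b^2k+b-1=(k+\al(1-\al))/\al^2$, so Conjecture~\ref{sunconj} is equivalent to
$$
\sum_{k=0}^{pn-1}(k+\al(1-\al))\f{(1/2)_k(\al)_k(1-\al)_k}{(k!)^3}\eq 0\pmod{\f{p^2n^2\al^2(\al)_n(1-\al)_n}{(n!)^2}}.
$$

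The first step is to refine Theorem~\ref{orranalogue} to the following: under $\al,z\in\Z_p$ with $\<-\al\>_p\eq 0\pmod{2}$,
$$
{}_2F_1\bigg[\begin{matrix}\f\al2&\f32-\f\al2\\&1\end{matrix}\bigg|z\bigg]_{pn-1}{}_2F_1\bigg[\begin{matrix}\f\al2&\f12-\f\al2\\&1\end{matrix}\bigg|z\bigg]_{pn-1}\eq{}_3F_2\bigg[\begin{matrix}\al&2-\al&\f12\\&1&1\end{matrix}\bigg|z\bigg]_{pn-1}\pmod{\f{p^2n^2(\al)_n(1-\al)_n}{(n!)^2}}.
$$
The case $n=1$ is Theorem~\ref{orranalogue}. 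For general $n$, split each sum over $0\le k<pn$ into blocks $k=jp+r$ with $0\le j<n$, $0\le r<p$, and factor Pochhammers as $(x)_{jp+r}=(x)_{jp}(x+jp)_r$. A first-order Taylor expansion of $(x+jp)_r$ in $jp$, valid modulo $p^2$, reduces matters to the $j=0$ blocks---where Theorem~\ref{orranalogue} applies directly---plus $j\ge 1$ remainders whose combined prefactors are products of binomial coefficients $\binom{-x}{jp}$ with $x$ ranging over the upper hypergeometric parameters, i.e.\ shifts of $\pm\al/2$ and $\pm\al$. A Lucas-type congruence for such binomials, crucially invoking the parity hypothesis $\<-\al\>_p\eq 0\pmod 2$, then forces these remainders on the two sides to differ by a multiple of $n^2(\al)_n(1-\al)_n/(n!)^2$, yielding the refined congruence.

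Next, specialize to $z=1$ and $\al=1/b$ and evaluate each ${}_2F_1$ on the LHS of the refined congruence. The second factor ${}_2F_1[\al/2,1/2-\al/2;1;1]_{pn-1}$ has $c-a-b=1/2$ and admits a $p$-adic analog of Gauss' summation (of the kind used in \cite{MaoPan,WangPan}) expressing it via $p$-adic Gamma values; under $\<-\al\>_p\eq 0\pmod 2$ this evaluation carries a factor of $(\al)_n/n!$. The first factor ${}_2F_1[\al/2,3/2-\al/2;1;1]_{pn-1}$ is classically divergent at $z=1$, but a Pfaff-type transformation, or a contiguous relation combined with Orr's identity \eqref{orr'} in formal-series form and suitable reindexing, yields a companion evaluation carrying a factor $(1-\al)_n/n!$. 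Multiplying the two evaluations produces a divisibility by $n^2(\al)_n(1-\al)_n/(n!)^2$; combining this with the $p^2$ from the refined congruence gives the required divisibility. Dividing through by $\al^2$ restores the original normalization in terms of $b^2k+b-1$, completing the proof.

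The principal obstacle is the first step. Existing parametric-congruence arguments (cf.\ \cite{MaoPan,WangPan,WangSun2020}) track only the first-order $p^2$ discrepancy at truncation $p-1$; extending them to truncation $pn-1$ with the enhanced modulus $n^2\binom{-1/b}{n}\binom{1/b-1}{n}$ requires a second-order analysis of the Pochhammer expansions $(x+jp)_r$ across all $n$ blocks and a combinatorial verification that the cross terms assemble into the claimed binomial factor rather than leaving spurious $p$-adic remainders. A secondary obstacle is the evaluation of the divergent factor ${}_2F_1[\al/2,3/2-\al/2;1;1]_{pn-1}$ at $z=1$ to the required precision: since Gauss' theorem does not apply classically, one must route the computation through a contiguous relation, a Pfaff transformation, or an indirect argument using \eqref{orr'} together with an independent evaluation of the accompanying ${}_3F_2$.
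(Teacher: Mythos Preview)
The statement is labeled a \emph{Conjecture} in the paper, and the paper does not prove it in general: it says explicitly that Corollary~\ref{orranaloguecor} confirms only the case $n=1$. There is thus no ``paper's own proof'' of the full statement to compare against. For $n=1$ your scheme---factor the two ${}_3F_2$'s via the Orr and Clausen congruences \eqref{orranalogueeq} and \eqref{clausen'analogue}, then evaluate the resulting truncated ${}_2F_1$'s at $z=1$ through the Mao--Pan $p$-adic Gauss summation---is exactly the route the paper takes in Lemma~\ref{keylem}.

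For general $n$ what you have written is a plan, not a proof, and the two items you yourself label ``obstacles'' are genuine unfilled gaps. The central one is the claimed refinement of Theorem~\ref{orranalogue} to truncation $pn-1$ with modulus $p^2 n^2(\al)_n(1-\al)_n/(n!)^2$. Your sketch---block-decompose $k=jp+r$, expand $(x+jp)_r$ to first order in $jp$, and appeal to a Lucas-type congruence---cannot by itself deliver this: a first-order Taylor expansion controls the error only modulo $p^2$, whereas the target modulus has $p$-adic valuation $2+v_p\bigl(n^2(\al)_n(1-\al)_n/(n!)^2\bigr)$, which grows with $n$ once $n>\<-\al\>_p$. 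Extracting that extra factor would require a new cancellation mechanism across the $j\ge1$ blocks that your outline does not supply. The second obstacle is equally real: no $pn$-truncated analogue of Lemma~\ref{maopanlem} is available to evaluate ${}_2F_1[\al/2,1/2-\al/2;1;1]_{pn-1}$ or the classically divergent companion ${}_2F_1[\al/2,3/2-\al/2;1;1]_{pn-1}$ to the required precision, and the contiguous-relation or Pfaff maneuvers you mention would themselves need such an evaluation as input. In short, the proposal correctly identifies the shape a proof might take and agrees with the paper when $n=1$, but for $n>1$ it leaves the conjecture open, as does the paper.
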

Note that Conjecture \ref{sunconj} with $n=1$ and $b\in\{2,3,4,6\}$ was first stated by the second author in \cite[Conjecture 5.9]{Sun2011}. Our following result confirms Conjecture \ref{sunconj} for $n=1$.

\begin{corollary}\label{orranaloguecor}
Let $b\in\Z^{+}$, and let $p$ be a prime with $p\eq\pm1\pmod{b}$ and $\<-1/b\>_p\eq0\pmod{2}$. Then
\begin{equation}\label{orranaloguecoreq}
\sum_{k=0}^{p-1}(b^2k+b-1)\frac{\binom{2k}{k}}{4^k}\binom{-1/b}{k}\binom{1/b-1}{k}\equiv0\pmod{p^2}.
\end{equation}
\end{corollary}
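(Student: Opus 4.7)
The plan is to deduce Corollary \ref{orranaloguecor} from Theorem \ref{orranalogue} combined with the Mao-Pan congruence \eqref{clausen'analogue}, both specialized to $(\alpha,z)=(1/b,1)$; note that the parity hypothesis $\langle-1/b\rangle_p\equiv0\pmod{2}$ is exactly what each of those two theorems demands. Setting
$$c_k := \frac{(1/2)_k(1/b)_k(1-1/b)_k}{(k!)^3}\quad\text{and}\quad d_k := \frac{(1/(2b))_k(1/2-1/(2b))_k}{(k!)^2},$$
the target sum in \eqref{orranaloguecoreq} is $S := \sum_{k=0}^{p-1}(b^2k+b-1)c_k$. The elementary decomposition $b^2k+b-1 = b(bk+b-1)-(b-1)^2$ splits $S$ into two pieces that the two theorems control: using the Pochhammer identity $(2-1/b)_k = (bk+b-1)(1-1/b)_k/(b-1)$, one has $\sum_{k=0}^{p-1}(bk+b-1)c_k = (b-1)\cdot{}_3F_2[1/b,2-1/b,1/2;1,1|1]_{p-1}$, while $\sum_{k=0}^{p-1}c_k = {}_3F_2[1/b,1-1/b,1/2;1,1|1]_{p-1}$ is tautological.

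Applying Theorem \ref{orranalogue} to the former and Mao-Pan \eqref{clausen'analogue} to the latter, and writing $A := {}_2F_1[1/(2b),3/2-1/(2b);1|1]_{p-1}$ and $B := {}_2F_1[1/(2b),1/2-1/(2b);1|1]_{p-1}$, I will obtain
$$S \equiv b(b-1)AB - (b-1)^2B^2 = (b-1)B\bigl[bA-(b-1)B\bigr]\pmod{p^2}.$$
The next reduction recognizes the bracket as itself a single truncated hypergeometric sum. The parallel Pochhammer ratio $(3/2-1/(2b))_k = (2bk+b-1)(1/2-1/(2b))_k/(b-1)$ rewrites $A$ as $(b-1)^{-1}\sum_{k=0}^{p-1}(2bk+b-1)d_k$, and $B = \sum_{k=0}^{p-1}d_k$ by definition; the algebraic cancellation $b(2bk+b-1)-(b-1)^2 = 2b^2k+b-1$ then collapses the bracket into
$$S \equiv B\cdot\sum_{k=0}^{p-1}(2b^2k+b-1)d_k\pmod{p^2}.$$

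The argument closes with a Gosper-type telescoping of this last sum. The hypergeometric recursion
$$\frac{d_{k+1}}{d_k} = \frac{(1/(2b)+k)(1/2-1/(2b)+k)}{(k+1)^2} = \frac{(2bk+1)(2bk+b-1)}{4b^2(k+1)^2}$$
(the second equality being the factorization $4b^2(1/(2b)+k)(1/2-1/(2b)+k) = (2bk+1)(2bk+b-1)$) lets one verify directly that $T_k := 4b^2k^2d_k$ satisfies $T_{k+1}-T_k = (2b^2k+b-1)d_k$, whence $\sum_{k=0}^{p-1}(2b^2k+b-1)d_k = T_p-T_0 = 4b^2p^2d_p$. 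Since each of $(1/(2b))_p$ and $(1/2-1/(2b))_p$ has $p$-adic valuation at least $1$ (each contains a single factor congruent to zero modulo $p$), while $(p!)^2$ has valuation exactly $2$, the quantity $d_p$ is a $p$-adic integer; hence $4b^2p^2d_p\equiv0\pmod{p^2}$ and the conclusion $S\equiv0\pmod{p^2}$ follows. The main obstacles are locating the decomposition $b^2k+b-1 = b(bk+b-1)-(b-1)^2$ that bridges the target sum to the two available theorems, and guessing the Gosper witness $T_k = 4b^2k^2d_k$; once these are in hand, everything else is routine Pochhammer bookkeeping.
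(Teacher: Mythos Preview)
Your argument is correct (with the harmless caveat that the Pochhammer identity $(2-1/b)_k=(bk+b-1)(1-1/b)_k/(b-1)$ presupposes $b\neq 1$; the case $b=1$ is trivial since $\binom{1/b-1}{k}=\binom{0}{k}$ kills every term with $k\geq 1$).  The route, however, differs from the paper's in two places.  First, the paper uses the decomposition $b^2k+b-1=b(bk+1)-1$, which via $(1+1/b)_k/(1/b)_k=bk+1$ produces the pair ${}_3F_2[1+1/b,1-1/b,1/2;1,1\mid 1]_{p-1}$ and ${}_3F_2[1/b,1-1/b,1/2;1,1\mid 1]_{p-1}$; it then invokes Theorem~\ref{orranalogue} with $\alpha=1-1/b$ (after observing that $\langle 1/b-1\rangle_p=p-1-\langle -1/b\rangle_p$ is again even) rather than with $\alpha=1/b$ as you do.  Second, and more substantively, the paper finishes by evaluating the two resulting ${}_2F_1$'s modulo $p^2$ through the $p$-adic Gamma function (Mao--Pan's Lemma~\ref{maopanlem}) and checking a $\Gamma_p$ identity, whereas you close with the Gosper certificate $T_k=4b^2k^2d_k$ that telescopes $\sum_{k=0}^{p-1}(2b^2k+b-1)d_k$ to $4b^2p^2d_p$ exactly.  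Your endgame is more elementary---it bypasses the $\Gamma_p$ machinery entirely and even yields an exact closed form for that auxiliary sum---while the paper's approach has the virtue of making the connection to Gauss-type ${}_2F_1(1)$ evaluations explicit.
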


The relation between Theorem \ref{orranalogue} and Corollary \ref{orranaloguecor} becomes more evident when we write \eqref{sunconjeq} as a difference of truncated hypergeometric series. Note that $$(x)_k/(1)_k=\binom{-x}{k}(-1)^k,\ (1/2)_k/(1)_k=\binom{2k}{k}/4^k\ \ \t{and}\ \ \f{(1+\f1b)_k}{(\f1b)_k}=bk+1.$$
Thus we have
\begin{align}\label{sunconjkey}
&\sum_{k=0}^{p-1}(b^2k+b-1)\frac{\binom{2k}{k}}{4^k}\binom{-1/b}{k}\binom{1/b-1}{k}\notag\\
=\ &b\,{}_3F_2\bigg[\begin{matrix}1+\f1b&1-\f1b&\f12\\&1&1\end{matrix}\bigg|1\bigg]_{p-1}-{}_3F_2\bigg[\begin{matrix}\f1b&1-\f1b&\f12\\&1&1\end{matrix}\bigg|1\bigg]_{p-1}.
\end{align}
In view of \eqref{clausen'analogue} and Theorem \ref{orranalogue}, in order to show Corollary \ref{orranaloguecor}, it suffices to evaluate some truncated ${}_2F_1$ series modulo $p^2$.

Taking $b=2,3,4,6$ in Corollary \ref{orranaloguecor} and noting that
\begin{gather*}
\binom{-1/2}{k}=\f{\binom{2k}{k}}{(-4)^k},\ \binom{-1/3}{k}\binom{-2/3}{k}=\f{\binom{2k}{k}\binom{3k}{k}}{27^k},\\
\binom{-1/4}{k}\binom{-3/4}{k}=\f{\binom{2k}{k}\binom{4k}{2k}}{64^k},\ \binom{-1/6}{k}\binom{-5/6}{k}=\f{\binom{6k}{3k}\binom{3k}{k}}{432^k},
\end{gather*}
we obtain the following congruences which confirm \cite[Conjecture 5.9]{Sun2011} with $a=1$.
\begin{corollary}\label{specialcases}
Let $p$ be an odd prime. If $p\eq1\pmod{3}$, then
\begin{equation}\label{specialcaseseq1}
\sum_{k=0}^{p-1}\f{9k+2}{108^k}\binom{2k}{k}^2\binom{3k}{k}\eq0\pmod{p^2}.
\end{equation}
If $p\eq1,3\pmod{8}$, then
\begin{equation}\label{specialcaseseq2}
\sum_{k=0}^{p-1}\f{16k+3}{256^k}\binom{2k}{k}^2\binom{4k}{2k}\eq0\pmod{p^{2+\delta_{p,3}}},
\end{equation}
where $\delta_{p,q}$ denotes the Kronecker delta symbol which takes $1$ or $0$ according as $p=q$ or not. \ If $p\eq1\pmod{4}$, then
\begin{equation}\label{specialcaseseq3}
\sum_{k=0}^{p-1}\f{4k+1}{64^k}\binom{2k}{k}^3\eq0\pmod{p^2}
\end{equation}
and
\begin{equation}\label{specialcaseseq4}
\sum_{k=0}^{p-1}\f{36k+5}{12^{3k}}\binom{6k}{3k}\binom{3k}{k}\binom{2k}{k}\eq0\pmod{p^{2+\delta_{p,5}}}.
\end{equation}
\end{corollary}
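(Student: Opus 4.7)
The plan is to deduce Corollary \ref{specialcases} from Corollary \ref{orranaloguecor} by taking $b\in\{2,3,4,6\}$ and inserting the four binomial product identities stated just above the corollary. The scheme has three parts: (i) verify that the hypothesis in each part of Corollary \ref{specialcases} implies both $p\equiv\pm1\pmod b$ and the parity condition $\langle -1/b\rangle_p\equiv0\pmod2$; (ii) perform the binomial substitution to turn the sum in \eqref{orranaloguecoreq} into the sum appearing in \eqref{specialcaseseq1}--\eqref{specialcaseseq4}; (iii) handle the single-prime strengthenings $p=3$ in \eqref{specialcaseseq2} and $p=5$ in \eqref{specialcaseseq4} (the $\delta$-terms in the modulus) by direct evaluation.

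For step (i) one uses $\langle -1/b\rangle_p=(p-1)/b$ when $p\equiv1\pmod b$ and $\langle -1/b\rangle_p=((b-1)p-1)/b$ when $p\equiv-1\pmod b$. A case analysis then gives: for $b=2$ the parity condition is $p\equiv1\pmod4$; for $b=3$ any odd prime $p\equiv1\pmod3$ is automatically $\equiv1\pmod6$, so $(p-1)/3$ is even; for $b=4$ the two subcases combine to $p\equiv1$ or $3\pmod8$; for $b=6$ the two subcases give $p\equiv1$ or $5\pmod{12}$, which for primes $p>3$ is equivalent to $p\equiv1\pmod4$. In each case the hypothesis matches that of Corollary \ref{orranaloguecor}. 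Step (ii) is a routine substitution governed by the arithmetic identities $4\cdot16=64$, $4\cdot27=108$, $4\cdot64=256$, and $4\cdot432=1728=12^3$; the prefactor $\binom{2k}{k}/4^k$ combines with $\binom{-1/b}{k}\binom{1/b-1}{k}$ to yield the denominators in \eqref{specialcaseseq1}--\eqref{specialcaseseq4}, while the linear factor $b^2k+b-1$ specializes to $4k+1$, $9k+2$, $16k+3$, $36k+5$ respectively.

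The remaining piece is step (iii): Corollary \ref{orranaloguecor} only provides a congruence modulo $p^2$, so the sharpenings for $p=3$ in \eqref{specialcaseseq2} and $p=5$ in \eqref{specialcaseseq4} must be verified separately by directly evaluating the short truncated sums $\sum_{k=0}^{p-1}$ modulo $27$ and modulo $125$ respectively. I do not expect a genuine mathematical obstacle here; the bulk of the work consists in the careful parity bookkeeping in step (i) and the two explicit small-prime numerical checks in step (iii).
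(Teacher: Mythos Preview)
Your proposal is correct and matches the paper's approach exactly: the paper's entire proof is the sentence preceding the corollary, ``Taking $b=2,3,4,6$ in Corollary \ref{orranaloguecor} and noting that [the four binomial identities], we obtain the following congruences\ldots''. Your steps (i)--(iii) spell out precisely the case analysis, substitution, and small-prime checks that the paper leaves to the reader, including the $\delta_{p,3}$ and $\delta_{p,5}$ strengthenings, which indeed must be verified directly since Corollary \ref{orranaloguecor} only yields the modulus $p^2$.
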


In 2019, the authors \cite{WangSun2019} established the modulus $p^3$ congruence for
$$
{}_4F_3\bigg[\begin{matrix}\alpha&1+\f12\alpha&\alpha&\alpha\\&\f12\alpha&1&1\end{matrix}\bigg|1\bigg]_{p-1}
$$
which is a parametric extension of \eqref{specialcaseseq3}, where $p$ is an odd prime and $\alpha$ is a $p$-adic unit.

We shall prove Theorem \ref{orranalogue} and Corollary \ref{orranaloguecor} in the next section.

\section{Proofs of Theorem \ref{orranalogue} and Corollary \ref{orranaloguecor}}
\setcounter{lemma}{0}
\setcounter{theorem}{0}
\setcounter{equation}{0}
\setcounter{conjecture}{0}
\setcounter{corollary}{0}
\setcounter{remark}{0}

To show Theorem \ref{orranalogue} we need the following result due to Tauraso \cite[Theorem 2]{Tauraso2016}.

\begin{lemma}\label{tauraso}
For any prime $p>3$ and $p$-adic integer $x$ we have
\begin{equation}\label{taurasoeq}
\l(\sum_{k=1}^{p-1}\binom{2k}{k}x^k\r)\l(\sum_{k=1}^{p-1}\binom{2k}{k}\f{x^k}k\r)\eq2\sum_{k=1}^{p-1}\binom{2k}{k}(H_{2k-1}-H_k)x^k\pmod{p},
\end{equation}
where $H_k=\sum_{j=1}^k1/j$ is the $k$th harmonic number.
\end{lemma}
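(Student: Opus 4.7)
My plan is to derive Lemma \ref{tauraso} from an exact formal power series identity in $\Q[[x]]$ and then verify that truncation at $k=p-1$ introduces only mod-$p$ corrections. Set $f(x):=(1-4x)^{-1/2}$ and $g(x):=-2\log\f{1+\sqrt{1-4x}}{2}=\sum_{k\gs 1}\bi{2k}{k}\f{x^k}{k}$ in $\Q[[x]]$. The classical parametric identity
$$\sum_{n\gs 0}\bi{2n+s}{n}x^n=\f{1}{\sqrt{1-4x}}\l(\f{2}{1+\sqrt{1-4x}}\r)^{\!s}$$
(obtainable from Lagrange inversion on the Catalan root $v=(1-\sqrt{1-4x})/2$, which satisfies $v(1-v)=x$) can be differentiated with respect to $s$ at $s=0$: using $\f{d}{ds}\bi{2n+s}{n}\big|_{s=0}=\bi{2n}{n}(H_{2n}-H_n)$, this gives $f(x)g(x)=2\sum_{n\gs 1}\bi{2n}{n}(H_{2n}-H_n)x^n$. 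Subtracting $g(x)$ and converting $H_{2n}=H_{2n-1}+\f{1}{2n}$ yields the exact formal identity
$$(f(x)-1)g(x)=2\sum_{n\gs 1}\bi{2n}{n}(H_{2n-1}-H_n)x^n,$$
whose $x^n$-coefficient, for every $n\gs 1$, is the rational identity
$$\sum_{k=1}^{n-1}\bi{2k}{k}\f{\bi{2(n-k)}{n-k}}{n-k}=2\bi{2n}{n}(H_{2n-1}-H_n).$$

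Next, I would compare $x^n$-coefficients on the two sides of \eqref{taurasoeq} as polynomials in $\Z_p[x]$. For $1\ls n\ls p-1$, the Cauchy-product indices $k_1+k_2=n$ automatically satisfy $k_1,k_2\ls n-1\ls p-2$, so the truncation at $p-1$ is inactive and the left-hand $x^n$-coefficient equals $2\bi{2n}{n}(H_{2n-1}-H_n)$ exactly, matching the right-hand side. For $p\ls n\ls 2p-2$, every pair with $1\ls k_1,k_2\ls p-1$ and $k_1+k_2=n$ forces at least one index $k$ into $[(p+1)/2,p-1]$; for such $k$, the bounds $p\ls 2k\ls 2p-2$ and $k<p$ give $v_p(\bi{2k}{k})=1$ while $v_p(k)=0$, so each summand $\bi{2k_1}{k_1}\bi{2k_2}{k_2}/k_2$ is divisible by $p$. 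Since the right-hand side of \eqref{taurasoeq} has no $x^n$-term for $n\gs p$, the polynomial difference lies in $p\Z_p[x]$, and specializing to any $x\in\Z_p$ yields the claimed congruence mod $p$.

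The principal obstacle is establishing the exact identity in the first step. If one prefers to avoid the parametric generating function, the underlying rational identity can alternatively be proved by induction on $n$ (using $\bi{2n}{n}=\f{4n-2}{n}\bi{2n-2}{n-1}$ together with the telescoping of $H_{2n-1}-H_n$), or via a Wilf--Zeilberger certificate applied to the summand $\bi{2k}{k}\bi{2(n-k)}{n-k}/(n-k)$; once the exact identity is available, all remaining steps reduce to elementary $p$-adic valuation bookkeeping.
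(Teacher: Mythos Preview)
The paper does not supply its own proof of this lemma; it is quoted as a known result of Tauraso (reference \cite{Tauraso2016}, Theorem~2) and used as a black box in the $a=p-1$ case of Theorem~\ref{orranalogue}. So there is no in-paper argument to compare against.

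Your proposal is a correct and self-contained proof. The derivation of the exact convolution identity
\[
\sum_{k=1}^{n-1}\binom{2k}{k}\frac{\binom{2(n-k)}{n-k}}{n-k}=2\binom{2n}{n}(H_{2n-1}-H_n)
\]
by differentiating the parametric generating function $\sum_{n\ge 0}\binom{2n+s}{n}x^n=(1-4x)^{-1/2}\bigl(\tfrac{2}{1+\sqrt{1-4x}}\bigr)^{s}$ at $s=0$ and then subtracting $g(x)$ is clean and valid; the subsequent truncation analysis is also right --- for $1\le n\le p-1$ the Cauchy-product range is unaffected by the cutoff, and for $p\le n\le 2p-2$ at least one factor $\binom{2k_i}{k_i}$ with $(p+1)/2\le k_i\le p-1$ contributes a factor of $p$ while no denominator $k_2$ is divisible by $p$. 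One small point worth making explicit in a write-up: for $(p+1)/2\le n\le p-1$ the right-hand coefficient $2\binom{2n}{n}(H_{2n-1}-H_n)$ involves $H_{2n-1}$, which contains the term $1/p$, but this is exactly cancelled by $p\mid\binom{2n}{n}$, so the coefficient indeed lies in $\Z_p$ and your ``polynomial difference in $p\Z_p[x]$'' conclusion is justified.
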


\noindent{\it Proof of Theorem \ref{orranalogue}}. Throughout the proof, we always set $a=\<-\alpha\>_p$.

{\it Case 1}. $a\leq p-3$.

Let
\begin{align*}
\Phi(x)=&\ {}_2F_1\bigg[\begin{matrix}\f12(-a+x)&\f12(a+3-x)\\&1\end{matrix}\bigg|z\bigg]_{p-1}{}_2F_1\bigg[\begin{matrix}\f12(-a+x)&\f12(a+1-x)\\&1\end{matrix}\bigg|z\bigg]_{p-1}\\
&-{}_3F_2\bigg[\begin{matrix}-a+x&a+2-x&\f12\\&1&1\end{matrix}\bigg|z\bigg]_{p-1},
\end{align*}
where $x\in\Z_p$. Expanding $\Phi(x)$ we have
\begin{align*}
\Phi(x)=&\sum_{k=0}^{p-1}\sum_{l=0}^{p-1}\f{(\f12(-a+x))_k(\f12(a+3-x))_k(\f12(-a+x))_l(\f12(a+1-x))_l}{k!^2l!^2}\cdot z^{k+l}\\
&-\sum_{j=0}^{p-1}\f{(-a+x)_j(a+2-x)_j(\f12)_j}{j!^3}\cdot z^j.
\end{align*}
Clearly, $\Phi(x)$ is a rational function in $x$ and $\Phi(0)\in\Z_p$. Therefore, by \cite[Lemma 4.1]{PTW} we have
\begin{equation}\label{phitp}
\Phi(tp)\eq\Phi(0)+tp\Phi'(0)\pmod{p^2}
\end{equation}
for any $t\in\Z_p$. In particular, we have
\begin{equation}\label{phip}
\Phi(p)\eq\Phi(0)+p\Phi'(0)\pmod{p^2}.
\end{equation}
Note that $-a/2,\ -a\in\{1-p,\ldots,-1,0\}$. In view of \eqref{orr'}, we have
$$
\Phi(0)={}_2F_1\bigg[\begin{matrix}-\f12a&\f12(a+3)\\&1\end{matrix}\bigg|z\bigg]{}_2F_1\bigg[\begin{matrix}-\f12a&\f12(a+1)\\&1\end{matrix}\bigg|z\bigg]-{}_3F_2\bigg[\begin{matrix}-a&a+2&\f12\\&1&1\end{matrix}\bigg|z\bigg]=0.
$$
Since $a\leq p-3$ we have $(a+3-p)/2,\ (a+1-p)/2,\ a+2-p\in\{1-p,\ldots,-1,0\}$. By \eqref{orr'} we also have
\begin{align*}
\Phi(p)=&\ {}_2F_1\bigg[\begin{matrix}\f12(-a+p)&\f12(a+3-p)\\&1\end{matrix}\bigg|z\bigg]{}_2F_1\bigg[\begin{matrix}\f12(-a+p)&\f12(a+1-p)\\&1\end{matrix}\bigg|z\bigg]\\
&-{}_3F_2\bigg[\begin{matrix}-a+p&a+2-p&\f12\\&1&1\end{matrix}\bigg|z\bigg]\\
=&\ 0.
\end{align*}
Substituting these into \eqref{phitp} and \eqref{phip}, we get
$$
\Phi(tp)\eq\Phi(0)=0\pmod{p^2}.
$$
Putting $t=(a+\alpha)/p$ we immediately obtain the desired result.

\medskip

{\it Case 2}. $a=p-1$.

We can directly verify this case for $p=3$. Below we assume $p>3$. In this case, we may write $\alpha=1+pt$ for some $t\in\Z_p$ since $\alpha\eq1\pmod{p}$. Then we have
\begin{align}\label{2F13/2}
&{}_2F_1\bigg[\begin{matrix}\f12\alpha&\f32-\f12\alpha\\&1\end{matrix}\bigg|z\bigg]_{p-1}=\sum_{k=0}^{p-1}\f{(\f12+\f12pt)_k(1-\f12pt)_k}{k!^2}\cdot z^k\notag\\
\eq&\sum_{k=0}^{p-1}\f{(\f12)_k}{k!}\l(1+pt\sum_{j=0}^{k-1}\f{1}{2j+1}-\f12ptH_k\r)z^k\notag\\
\eq&\sum_{k=0}^{p-1}\binom{2k}{k}(1+ptH_{2k}-ptH_k)\l(\f{z}4\r)^k\pmod{p^2}.
\end{align}
Similarly, we also have
\begin{equation}\label{2F11/2}
{}_2F_1\bigg[\begin{matrix}\f12\alpha&\f12-\f12\alpha\\&1\end{matrix}\bigg|z\bigg]_{p-1}\eq1-\f12pt\sum_{k=1}^{p-1}\f{\binom{2k}{k}}{k}\l(\f{z}4\r)^k\pmod{p^2}
\end{equation}
and
\begin{equation}\label{3F21/2}
{}_3F_2\bigg[\begin{matrix}\alpha&2-\alpha&\f12\\&1&1\end{matrix}\bigg|z\bigg]_{p-1}\eq\sum_{k=0}^{p-1}\binom{2k}{k}\l(\f{z}4\r)^k\pmod{p^2}.
\end{equation}
Combining \eqref{2F13/2}--\eqref{3F21/2} with Lemma \ref{tauraso}, we arrive at
\begin{align*}
&{}_2F_1\bigg[\begin{matrix}\f12\alpha&\f32-\f12\alpha\\&1\end{matrix}\bigg|z\bigg]_{p-1}{}_2F_1\bigg[\begin{matrix}\f12\alpha&\f12-\f12\alpha\\&1\end{matrix}\bigg|z\bigg]_{p-1}-{}_3F_2\bigg[\begin{matrix}\alpha&2-\alpha&\f12\\&1&1\end{matrix}\bigg|z\bigg]_{p-1}\\
\eq&-\f12pt\l(\sum_{k=0}^{p-1}\binom{2k}{k}\l(\f{z}4\r)^k\r)\l(\sum_{k=1}^{p-1}\f{\binom{2k}{k}}{k}\l(\f{z}{4}\r)^k\r)+pt\sum_{k=0}^{p-1}\binom{2k}{k}(H_{2k}-H_k)\l(\f{z}4\r)^k\\
\eq&-\f12pt\sum_{k=1}^{p-1}\f{\binom{2k}{k}}{k}\l(\f{z}{4}\r)^k-pt\sum_{k=1}^{p-1}\binom{2k}{k}(H_{2k-1}-H_k)\l(\f{z}{4}\r)^k+pt\sum_{k=0}^{p-1}\binom{2k}{k}(H_{2k}-H_k)\l(\f{z}4\r)^k\\
=&\ 0\pmod{p^2}.
\end{align*}

The proof of Theorem \ref{orranalogue} is now complete.\qed

\medskip

Let us recall the definition and the main properties of the $p$-adic Gamma function introduced by Morita \cite{Morita} as a $p$-adic analogue of the classical Gamma function. Let $p$ be an odd prime. For any $n\in\N$ define
$$
\Gamma_p(n)=(-1)^n\prod_{\substack{1\leq k<n\\ p\nmid k}}k.
$$
In particular, set $\Gamma_p(0)=1$. Clearly, the values of $\Gamma_p(n)$ belong to the group $\Z_p^{\times}$ of $p$-adic units. It is known that the definition of $\Gamma_p(n)$ can be extended to $\Z_p$ since $\N$ is a dense subset of $\Z_p$ in the sense of $p$-adic norm $|\cdot|_p$. That is, for all $x\in\Z_p$ we can define
$$
\Gamma_p(x)=\lim_{\substack{n\in\N\\|x-n|_p\to0}}\Gamma_p(n).
$$
Similar to the classical Gamma function, the $p$-adic Gamma function has some interesting properties. For example, for any $x\in\Z_p$ we have
\begin{equation}\label{padicgamma1}
\f{\Gamma_p(x+1)}{\Gamma_p(x)}=\begin{cases}-x\quad&\t{if}\ p\nmid x,\\
-1\quad&\t{if}\ p\mid x,
\end{cases}
\end{equation}
and
\begin{equation}\label{padicgamma2}
\Gamma_p(x)\Gamma_p(1-x)=(-1)^{p-\<-x\>_p}.
\end{equation}
The reader may consult \cite{Robert00} for more properties of the $p$-adic Gamma function.

\begin{lemma}[Mao and Pan {\cite[Theorem 1.1]{MaoPan}}]\label{maopanlem} Let $p$ be an odd prime and $\alpha,\beta\in\Z_p$. If $\<-\alpha\>_p+\<-\beta\>_p<p$, then
\begin{equation}\label{maopanlemeq1}
{}_2F_1\bigg[\begin{matrix}\alpha&\beta\\&1\end{matrix}\bigg|1\bigg]_{p-1}\eq-\f{\Gamma_p(1-\alpha-\beta)}{\Gamma_p(1-\alpha)\Gamma_p(1-\beta)}\pmod{p^2}.
\end{equation}
If $\<-\alpha\>_p+\<-\beta\>_p\geq p$, then
\begin{equation}\label{maopanlemeq2}
{}_2F_1\bigg[\begin{matrix}\alpha&\beta\\&1\end{matrix}\bigg|1\bigg]_{p-1}\eq(\alpha+\beta+\<-\alpha\>_p+\<-\beta\>_p-p)\f{\Gamma_p(1-\alpha-\beta)}{\Gamma_p(1-\alpha)\Gamma_p(1-\beta)}\pmod{p^2}.
\end{equation}
\end{lemma}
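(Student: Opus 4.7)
The plan is to prove both congruences by a parametric lifting argument in the spirit of the proof of Theorem \ref{orranalogue}. Set $a=\<-\alpha\>_p$ and $b=\<-\beta\>_p$, and write $\alpha=-a+px$, $\beta=-b+py$ with $x,y\in\Z_p$. Both sides of each of the proposed congruences are rational in $x,y$ with $\Z_p$ values near $x=y=0$, hence $p$-adically analytic there. A $p$-adic Taylor expansion then gives, for either side $\Psi$,
$$
\Psi(x,y)\eq\Psi(0,0)+px\,\partial_x\Psi(0,0)+py\,\partial_y\Psi(0,0)\pmod{p^2}.
$$
Thus it suffices to check: (a) the two sides agree at the base point $x=y=0$ in $\Z_p$, and (b) their first-order partial derivatives agree at the base point modulo $p$.

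For step (a), at the base point $\alpha=-a,\,\beta=-b$ are nonpositive integers of absolute value at most $p-1$, so the truncated series is the full terminating Chu--Vandermonde sum
$$
{}_2F_1\bigg[\begin{matrix}-a&-b\\ &1\end{matrix}\bigg|1\bigg]_{p-1}=\binom{a+b}{a}.
$$
The right-hand side at the base point is then computed via \eqref{padicgamma1}. When $a+b<p$, all of $1+a,\,1+b,\,1+a+b$ lie in $\{1,\ldots,p-1\}$, so $\Gamma_p(1+n)=(-1)^{1+n}n!$, and a sign check gives $-\Gamma_p(1+a+b)/(\Gamma_p(1+a)\Gamma_p(1+b))=\binom{a+b}{a}$, matching the LHS. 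When $a+b\geq p$ (so $p\leq a+b\leq 2p-2$), the product defining $\Gamma_p(1+a+b)$ skips the single multiple of $p$ in $[1,a+b]$, so the Gamma ratio carries a hidden factor $1/p$; the prefactor $\alpha+\beta+a+b-p$ evaluates to $-p$ at the base point, cancelling this $1/p$ and again yielding $\binom{a+b}{a}$.

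For step (b), logarithmically differentiate both sides with respect to $\alpha$ at the base point (the $\beta$ case is symmetric). On the right, the derivative of $\log\Gamma_p$ evaluated at a positive integer $n+1$ is, up to a constant independent of $n$, the truncated harmonic sum $\sum_{1\leq k\leq n,\,p\nmid k}1/k$. On the left, for each $k$ in the summation range one differentiates $(\alpha)_k(\beta)_k/(k!)^2$: for $k\leq a$ this produces an ordinary sum $\sum_{0\leq j<k}1/(j-a)$ weighted by the surviving Pochhammer values, while for $k>a$ the factor $(\alpha)_k$ vanishes at $\alpha=-a$ but its derivative does not, contributing $(-1)^a a!(k-a-1)!$ in that slot. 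Reorganising these contributions modulo $p$ and matching them against the $p$-adic digamma value on the right reduces to a harmonic-sum identity that can be verified by splitting the outer summation at $k=a$ and applying standard manipulations (telescoping and reindexing $j\mapsto a-j$).

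The main obstacle is the case $a+b\geq p$: hidden $p$-divisibilities appear both in the Gamma ratio on the right and in the tail terms $p-b\leq k\leq p-1$ of the truncated series on the left, where $(\beta)_k$ acquires a factor of $p$. Tracking how these two independent $p$-factors conspire to produce precisely the prefactor $\alpha+\beta+a+b-p$ modulo $p^2$ is the delicate part of the argument, and is the reason the two cases must be stated and proved separately. Once this bookkeeping is settled, the harmonic-sum identities required in step (b) follow from elementary manipulations modulo $p$.
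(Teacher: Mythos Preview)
The paper does not give a proof of this lemma; it is simply quoted from Mao and Pan \cite[Theorem 1.1]{MaoPan}. So there is no ``paper's own proof'' to compare against, and your proposal is effectively an attempt to reprove an external result.

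As a strategy your outline is sound and is close in spirit to how such statements are usually proved: reduce modulo $p^2$ to a first-order Taylor expansion about the terminating base point $(\alpha,\beta)=(-a,-b)$, verify the value there via Chu--Vandermonde and the elementary evaluation of $\Gamma_p$ at small positive integers, then match the first derivatives. Your treatment of step (a) is correct in both regimes, including the observation that for $a+b\ge p$ the hidden factor $1/p$ in $(a+b)!/\bigl(a!\,b!\bigr)$ is exactly cancelled by the prefactor, which equals $-p$ at the base point.

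Two points deserve attention. First, a small inaccuracy: the right-hand side is \emph{not} rational in $x,y$, because $\Gamma_p$ is not a rational function. It is, however, locally analytic on $\Z_p$, and the functional equation \eqref{padicgamma1} gives the recursion $\psi_p(x+1)-\psi_p(x)=1/x$ (for $p\nmid x$) for $\psi_p=\Gamma_p'/\Gamma_p$, so the first-order expansion you need still holds; just do not appeal to the ``rational function'' lemma \cite[Lemma 4.1]{PTW} for that side. Second, step (b) is where all the content lies, and you have only asserted that the resulting harmonic-sum identity ``can be verified by splitting the outer summation at $k=a$ and applying standard manipulations''. Concretely, the $\alpha$-derivative of the left side at the base point is
\[
\sum_{k=0}^{a}\binom{a}{k}\binom{b}{k}\sum_{j=0}^{k-1}\frac{1}{j-a}
\;+\;\sum_{k=a+1}^{b}\frac{(-1)^a a!\,(k-a-1)!}{(k!)^2}\,(-b)_k
\]
(taking $a\le b$), and this must be matched against $\binom{a+b}{a}\bigl(H_{a+b}^{\flat}-H_a^{\flat}\bigr)$ modulo $p$, where $H_n^{\flat}=\sum_{1\le j\le n,\;p\nmid j}1/j$. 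In the regime $a+b\ge p$ one must additionally control the tail terms with $k\ge p-b$ on the left and the derivative of the linear prefactor on the right. None of this is carried out in your proposal. Until those identities are actually established, the argument is a plan rather than a proof.
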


\begin{lemma}\label{keylem}
Let $b\in\{2,3,4,\ldots\}$, and let $p$ be a prime with $p\eq\pm1\pmod{b}$ and $\<-1/b\>_p\eq0\pmod{2}$. Then
\begin{equation}\label{keylemeq}
b\,{}_3F_2\bigg[\begin{matrix}1+\f1b&1-\f1b&\f12\\&1&1\end{matrix}\bigg|1\bigg]_{p-1}\eq{}_3F_2\bigg[\begin{matrix}\f1b&1-\f1b&\f12\\&1&1\end{matrix}\bigg|1\bigg]_{p-1}\pmod{p^2}.
\end{equation}
\end{lemma}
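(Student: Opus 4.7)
The plan is to combine Theorem~\ref{orranalogue} and Clausen's analogue \eqref{clausen'analogue} (both instantiated at $\alpha=1/b$ and $z=1$) to reduce \eqref{keylemeq} to a statement about two truncated ${}_2F_1$ series, which is then settled by Mao--Pan's Lemma~\ref{maopanlem} together with the $p$-adic Gamma recurrence \eqref{padicgamma1}.

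First I would observe that the Pochhammer identities $(1+\f1b)_k=(1+bk)(\f1b)_k$ and $(2-\f1b)_k=\f{b-1+bk}{b-1}(1-\f1b)_k$ allow us to express
\begin{align*}
S_1&:=b\cdot{}_3F_2\bigg[\begin{matrix}1+\f1b&1-\f1b&\f12\\&1&1\end{matrix}\bigg|1\bigg]_{p-1},\\
S_2&:={}_3F_2\bigg[\begin{matrix}\f1b&1-\f1b&\f12\\&1&1\end{matrix}\bigg|1\bigg]_{p-1},\\
T&:={}_3F_2\bigg[\begin{matrix}\f1b&2-\f1b&\f12\\&1&1\end{matrix}\bigg|1\bigg]_{p-1}
\end{align*}
as weighted sums over the common term $(\f1b)_k(1-\f1b)_k(\f12)_k/k!^3$. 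A direct calculation then yields the exact linear identity
$$S_1-S_2=(b-1)\bigl(bT-(b-1)S_2\bigr),$$
so \eqref{keylemeq} is equivalent to $(b-1)\bigl(bT-(b-1)S_2\bigr)\eq0\pmod{p^2}$.

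Next I would apply Theorem~\ref{orranalogue} with $\alpha=1/b$, $z=1$ (valid since $\<-1/b\>_p$ is even by hypothesis) to obtain $T\eq AB\pmod{p^2}$, and \eqref{clausen'analogue} with the same parameters to obtain $S_2\eq A^2\pmod{p^2}$, where
$$A:={}_2F_1\bigg[\begin{matrix}\f1{2b}&\f12-\f1{2b}\\&1\end{matrix}\bigg|1\bigg]_{p-1},\qquad B:={}_2F_1\bigg[\begin{matrix}\f1{2b}&\f32-\f1{2b}\\&1\end{matrix}\bigg|1\bigg]_{p-1}.$$
Substituting reduces the task to showing $(b-1)A\bigl(bB-(b-1)A\bigr)\eq0\pmod{p^2}$.

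In the generic case $a:=\<-1/b\>_p\ls p-3$ (equivalently $b\not\eq 1\pmod p$), writing $a=2m$ a short check gives $\<-\f1{2b}\>_p=m$, $\<-(\f12-\f1{2b})\>_p=\f{p-1}2-m$, and $\<-(\f32-\f1{2b})\>_p=\f{p-3}2-m$, so the two relevant parameter sums $\f{p-1}2$ and $\f{p-3}2$ are both strictly less than $p$, and the first branch of Lemma~\ref{maopanlem} supplies closed-form $\Gamma_p$-expressions for $A$ and $B$. The functional equation \eqref{padicgamma1} then yields $\Gamma_p(-\f12)=2\Gamma_p(\f12)$ and $\Gamma_p(-\f12+\f1{2b})=\f{2b}{b-1}\Gamma_p(\f12+\f1{2b})$; substituting these into the formula for $B$ collapses it to $bB\eq(b-1)A\pmod{p^2}$, which closes this case.

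The main obstacle is the edge case $a=p-1$: together with $p\eq\pm1\pmod b$ and $b\gs 2$, this forces $b=p+1$, whence $b-1=p$. Here $\<-\f1{2b}\>_p+\<-(\f32-\f1{2b})\>_p=\f{p-1}2+(p-1)\gs p$, and the second branch of Lemma~\ref{maopanlem} yields only $B\eq\f{p}{2}\cdot(\t{unit})\pmod{p^2}$, so $B\eq 0\pmod p$ but typically not modulo $p^2$. The missing power of $p$ is, however, supplied by the prefactor $b-1=p$: since $bB-(b-1)A=(p+1)B-pA$ is divisible by $p$, the product $(b-1)A\bigl(bB-(b-1)A\bigr)=pA\cdot\bigl((p+1)B-pA\bigr)$ is divisible by $p^2$. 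This completes the proof.
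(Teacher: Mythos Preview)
Your argument is correct. The overall strategy---reduce via Theorem~\ref{orranalogue} and \eqref{clausen'analogue} to a relation between two truncated ${}_2F_1$'s, then close with Lemma~\ref{maopanlem} and the recurrence \eqref{padicgamma1}---is the same as the paper's. The execution differs in one choice: you apply Theorem~\ref{orranalogue} with $\alpha=1/b$, which produces the auxiliary series $T={}_3F_2[\,1/b,\,2-1/b,\,1/2\,]$ and forces you to pass through the exact identity $S_1-S_2=(b-1)\bigl(bT-(b-1)S_2\bigr)$; the paper instead observes that $\langle 1/b-1\rangle_p=p-1-\langle-1/b\rangle_p$ is also even and applies Theorem~\ref{orranalogue} with $\alpha=1-1/b$, which factors $S_1/b$ \emph{directly} as $A\cdot{}_2F_1[\,1/2-1/(2b),\,1+1/(2b)\,]$. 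That choice has a concrete payoff: the residue sums for the paper's two ${}_2F_1$'s are always $(p-1)/2$ and $(p-3)/2$, so only the first branch of Lemma~\ref{maopanlem} is ever needed and there is no edge case. Your route with $B={}_2F_1[\,1/(2b),\,3/2-1/(2b)\,]$ runs into the second branch precisely when $b=p+1$, and you then salvage it via the spare factor $b-1=p$; this is fine, but the paper's parameter choice simply sidesteps the issue.
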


\begin{proof}
It is clear that $p$ is odd and
$$
\l\<\f1b-1\r\>_p=p-1-\l\<-\f1b\r\>_p\eq0\pmod{2}.
$$
Therefore, by \eqref{clausen'analogue} and Theorem \ref{orranalogue} we have
$$
{}_3F_2\bigg[\begin{matrix}\f1b&1-\f1b&\f12\\&1&1\end{matrix}\bigg|1\bigg]_{p-1}\eq{}_2F_1\bigg[\begin{matrix}\f1{2b}&\f12-\f1{2b}\\&1\end{matrix}\bigg|1\bigg]_{p-1}^2\pmod{p^2}
$$
and
$$
{}_3F_2\bigg[\begin{matrix}1+\f1b&1-\f1b&\f12\\&1&1\end{matrix}\bigg|1\bigg]_{p-1}\eq{}_2F_1\bigg[\begin{matrix}1+\f1{2b}&\f12-\f1{2b}\\&1\end{matrix}\bigg|1\bigg]_{p-1}{}_2F_1\bigg[\begin{matrix}\f1{2b}&\f12-\f1{2b}\\&1\end{matrix}\bigg|1\bigg]_{p-1}\pmod{p^2}.
$$

Now we assume $p\eq1\pmod{b}$. Note that
$$\l\<-\f1{2b}\r\>_p+\l\<\f1{2b}-\f12\r\>_p=\f{p-1}{2b}+\f{p-1}{2}-\f{p-1}{2b}<p$$
and
$$\l\<-1-\f1{2b}\r\>_p+\l\<\f1{2b}-\f12\r\>_p=\f{p-1}{2b}-1+\f{p-1}{2}-\f{p-1}{2b}<p.$$
Thus, by Lemma \ref{maopanlem} we deduce that
\begin{align*}
&b\,{}_3F_2\bigg[\begin{matrix}1+\f1b&1-\f1b&\f12\\&1&1\end{matrix}\bigg|1\bigg]_{p-1}-{}_3F_2\bigg[\begin{matrix}\f1b&1-\f1b&\f12\\&1&1\end{matrix}\bigg|1\bigg]_{p-1}\\
\eq&b\cdot\f{\Gamma_p(-\f12)\Gamma_p(\f12)}{\Gamma_p(\f12+\f1{2b})^2\Gamma_p(-\f1{2b})\Gamma_p(1-\f1{2b})}-\f{\Gamma_p(\f12)^2}{\Gamma_p(1-\f1{2b})^2\Gamma_p(\f12+\f{1}{2b})^2}\\
=&0\pmod{p^2},
\end{align*}
where we have used the facts
$$
\Gamma_p\l(\f12\r)=\f12\Gamma_p\l(-\f12\r)\ \t{and}\ \Gamma_p\l(1-\f1{2b}\r)=\f1{2b}\Gamma_p\l(-\f1{2b}\r).
$$

Below we suppose that $p\eq-1\pmod{b}$. Note that
$$\l\<-\f1{2b}\r\>_p+\l\<\f1{2b}-\f12\r\>_p=\f12\l(p-\f{p+1}b\r)+\f{p-1}{2}-\f12\l(p-\f{p+1}b\r)<p$$
and
$$\l\<-1-\f1{2b}\r\>_p+\l\<\f1{2b}-\f12\r\>_p=\f12\l(p-\f{p+1}b\r)-1+\f{p-1}{2}-\f12\l(p-\f{p+1}b\r)<p.$$
Similarly, as in the case $p\eq1\pmod{b}$, we also have
$$
b\, {}_3F_2\bigg[\begin{matrix}1+\f1b&1-\f1b&\f12\\&1&1\end{matrix}\bigg|1\bigg]_{p-1}-{}_3F_2\bigg[\begin{matrix}\f1b&1-\f1b&\f12\\&1&1\end{matrix}\bigg|1\bigg]_{p-1}\eq0\pmod{p^2}.
$$

This completes the proof.
\end{proof}

\medskip

\noindent{\it Proof of Corollary \ref{orranaloguecor}}. Clearly, \eqref{orranaloguecoreq} holds for $b=1$. If $b\geq2$, then the desired result easily follows from Lemma \ref{keylem}.\qed

\begin{Acks}
The authors would like to thank the two referees for helpful comments.
\end{Acks}

\end{document}